\newenvironment{smatrix}{\left(\begin{smallmatrix}}{\end{smallmatrix}\right)}
\def\a{\alpha} \def\b{\beta}  \def\f{\varphi}
 \def\s{\sigma}  
\def\minus{\smallsetminus}
\def\({\left(} \def\){\right)}
\def\<{\langle} \def\>{\rangle}
 \def\sp{\mathrm{span}}
\newtheorem{thm}{Theorem}
\newtheorem{lma}[thm]{Lemma}
\newtheorem{prop}[thm]{Proposition}
\newtheorem{cor}[thm]{Corollary}
\renewcommand{\le}{\leqslant}
\title{Vector product algebras}
\author{Erik Darp\"{o}  \\ {\small \textit{Matematiska Institutionen, Uppsala
  Universitet,}}\\{\small\textit{Box 480, S-75106 Uppsala,
  Sweden.}}\\{\small\texttt{erik.darpo@math.uu.se}}}
\begin{document}
\selectlanguage{english}
\date{}
\maketitle 

\begin{abstract}
Vector products can be defined on spaces of dimensions 0, 1, 3 and 7 only, and their
isomorphism types are determined entirely by their adherent symmetric bilinear forms.
We present a short and elementary proof for this classical result.
\end{abstract}

\textit{Keywords: Vector product algebra, composition algebra. \\
MSC 2000: 17A30 (17A75)}

\section{Introduction}

Throughout this article, $k$ denotes a field of characteristic different from $2$.

A \emph{vector product algebra} over $k$ is a vector space $V$ over $k$ equipped with
an anti-symmetric bilinear map
\begin{equation*}
  V\times V\to V,\; (u,v)\mapsto uv
\end{equation*}
and a non-degenerate symmetric bilinear form $\<\,\>:V\times V\to k$
such that
\begin{enumerate}
\item\label{d1} $\<uv,w\>=\<u,vw\>$ and
\item\label{d2} $\<uv,uv\>=\<u,u\>\<v,v\> -\<u,v\>^2$
\end{enumerate}
for all $u,v,w\in V$.
A morphism between vector product algebras $V$ and $W$ is an orthogonal map $\f:V\to W$
such that $\f(uv)=\f(u)\f(v)$ for all $u,v\in V$.
Two vector product algebras $V$ and $W$ are isomorphic if there exists a bijective
morphism $\f:V\to W$.

A related concept is the one of a \emph{composition algebra}.
A composition algebra is a non-zero vector space $A$ together with a bilinear 
multiplication map $A\times A\to A,\;(x,y)\mapsto xy$ and a non-degenerate symmetric
bilinear form $\<\,\>$ such that $\<xy,xy\>=\<x,x\>\<y,y\>$ for all
$x,y\in A$. 
A morphism between composition algebras $A$ and $B$ is an orthogonal
map $\f:A\to B$ satisfying $\f(xy)=\f(x)\f(y)$ for all $x,y\in A$.

Given a vector product algebra $V$ over $k$, the algebra $\mathcal{H}(V)=k\times V$ with
multiplication
\begin{equation*}
  \begin{pmatrix}\a\\v\end{pmatrix}\begin{pmatrix}\b\\w\end{pmatrix}=
      \begin{pmatrix}\a\b-\<v,w\>\\\a w + \b v + vw\end{pmatrix}
\end{equation*}
and bilinear form
\begin{math}
  \left\<\begin{smatrix}\a\\v\end{smatrix},\begin{smatrix}\b\\w\end{smatrix}\right\>=
    \a\b+\<v,w\>
\end{math}
is a composition algebra with identity element 
$\begin{smatrix}1\\0 \end{smatrix} \in k\times V$. Conversely, every composition 
algebra with identity element is isomorphic to $\mathcal{H}(V)$ for some vector product
algebra $V$.
On the other hand, taking $V$ to be the orthogonal complement (with respect to $\<\,\>$) of the identity element in a
unital composition algebra $A$, the product 
\begin{equation} \label{vpmult}
  u\times v = \frac{1}{2}(uv-vu) \in V \qquad \mbox{for} \quad u,v\in V
\end{equation}
makes $V$ a vector product algebra, with the bilinear form induced from $A$.
These construction actually determine an equivalence between the respective categories of
vector product algebras and composition algebras with identity element
(c.f.~\cite{vectorcross}).

Unital composition algebras have been extensively studied (some examples are
\cite{albert41,dickson19,jac58,kaplansky}). They occur only in dimensions 
1, 2, 4 and 8 (so in particular they are all finite-dimensional), and can be constructed from
the ground field via application of the Cayley-Dickson process. Two unital composition
algebras are isomorphic if and only if their respective bilinear forms are equivalent. See
e.g. \cite{jac58} for details.
The above immediately yields the following statement for vector product algebras. 
\begin{thm}\label{main}
  \begin{enumerate}
  \item The dimension of any vector product algebra is either 0, 1, 3 or 7. \label{main1}
  \item Two vector product algebras are isomorphic if and only if their respective
  bilinear forms are equivalent. \label{main2} 
  \end{enumerate}
\end{thm}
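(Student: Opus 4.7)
The plan is to transfer both statements from the classification of unital composition algebras via the equivalence of categories $V\leftrightarrow\mathcal{H}(V)$ mentioned in the excerpt.

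For part (\ref{main1}), I would simply observe that $\dim\mathcal{H}(V)=1+\dim V$, and since unital composition algebras exist only in dimensions $1,2,4,8$, we get $\dim V\in\{0,1,3,7\}$. Conversely, each of these dimensions is realized: the vector product algebra $V$ sits inside the corresponding composition algebra (ground field, a quadratic etale extension, a quaternion algebra, an octonion algebra) as the orthogonal complement of $1$, with the product given by \eqref{vpmult}.

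For part (\ref{main2}), the forward direction is immediate: any isomorphism of vector product algebras is by definition orthogonal and hence pulls back one bilinear form to the other. For the converse, suppose the bilinear forms $\<\,\>_V$ and $\<\,\>_W$ are equivalent. The bilinear form on $\mathcal{H}(V)=k\times V$ decomposes as $\<1\>\perp\<\,\>_V$, and similarly for $W$. Equivalence of $\<\,\>_V$ and $\<\,\>_W$ therefore implies equivalence of the forms on $\mathcal{H}(V)$ and $\mathcal{H}(W)$. By the quoted classification of unital composition algebras, $\mathcal{H}(V)\cong\mathcal{H}(W)$ as composition algebras. Finally, since the functors $V\mapsto\mathcal{H}(V)$ and $A\mapsto 1^\perp$ constitute an equivalence of categories between vector product algebras and unital composition algebras, any isomorphism $\mathcal{H}(V)\to\mathcal{H}(W)$ yields an isomorphism $V\to W$ of vector product algebras.

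The only place where real care is needed is the step $\<1\>\perp\<\,\>_V\cong\<1\>\perp\<\,\>_W\Rightarrow\<\,\>_V\cong\<\,\>_W$ implicit in the forward direction of $\mathcal{H}(V)\cong\mathcal{H}(W)\iff$ forms equivalent; in the direction we need (equivalence of forms on $V,W$ implies equivalence on $\mathcal{H}(V),\mathcal{H}(W)$) this is automatic, so no cancellation theorem is required. Thus the main content is really only the verification that the categorical equivalence is compatible with orthogonality of morphisms, which is built into the definitions: a morphism of vector product algebras induces the orthogonal map $(\a,v)\mapsto(\a,\f(v))$ on $\mathcal{H}$, and conversely any composition algebra isomorphism preserves the identity and hence restricts to an orthogonal, product-preserving map on the orthogonal complement. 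With these observations the theorem follows immediately.
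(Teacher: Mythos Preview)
Your argument is correct, but it is precisely the classical route via unital composition algebras that the paper sets out to avoid. The paper's own proof is direct and self-contained: it introduces the notion of a \emph{multiplicatively independent set} $E\subset V$ (each $e\in E$ anisotropic and orthogonal to the subalgebra generated by $E\setminus\{e\}$), shows that such a set with $m$ elements generates a subalgebra of dimension $2^m-1$ with an explicit orthogonal basis of iterated products, and then verifies by a short computation with Lemma~\ref{vm} that no multiplicatively independent set can have four elements (one obtains $u(v(wz))=-u(v(wz))$, forcing it to vanish). This yields $\dim V\in\{0,1,3,7\}$ and also excludes infinite dimension. For part~(\ref{main2}), the paper builds up matching multiplicative bases of $V$ and $W$ inductively, using Witt cancellation at each step to find anisotropic vectors of equal norm in the successive orthogonal complements.

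The trade-off is clear. Your approach is much shorter to write down but imports the entire theory of unital composition algebras (Hurwitz/Cayley--Dickson, the fact that isomorphism is detected by the norm form), so the real work is hidden in the cited results. The paper's approach is longer on the page but entirely elementary: nothing beyond basic bilinear-form algebra and Witt's theorem is used, and one never leaves the category of vector product algebras. In effect the paper re-derives, intrinsically, the doubling structure that in your argument is borrowed from the composition-algebra side.
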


Theorem~\ref{main} first appears in the article \cite{vectorcross} by Brown and Gray
(1967). The authors use the equivalence between vector product algebras and unital
composition algebras (which they prove) to reach the result.
Rost~\cite{rost} has studied vector product algebras directly, without using the relation
with composition algebras. His result tells, that if $d$ is the dimension of a vector
product algebra, then the identity $d(d-1)(d-3)(d-7)=0$ holds in the ground field. 
An alternative proof for this identity (which is weaker than Theorem~\ref{main}) has also
been given by Meyberg~\cite{meyberg}.

Over the real ground field, with $\<\,\>$ being a scalar product, vector products were
first considered, and classified, by Eckmann \cite{eckmann} in 1942, using topological
methods. Other treatments are found in for example \cite{massey} and
\cite{composition}. The technique of the present article is used in \cite{svp} for a
comprehensive proof of the classification theorem in this special case.

In this article, we present a direct proof of Theorem~\ref{main}, avoiding the detour via
composition algebras. In spite of its simplicity, the proof idea seems to have gone
unnoticed in the literature so far. 
The basic line of reason is the following. If $W\subsetneq V$ is a subalgebra of a vector
product algebra $V$, then by adjoining an additional element, say $e$, to $W$, one gets a
subalgebra $\<W\cup\{e\}\>\subset V$ of dimension $2\dim W+1$. Hence, starting with the
trivial subalgebra $V_0=0\subset V$, one builds inductively a chain 
$V_0\subset V_1\subset\dots$ of subalgebras of $V$ with $\dim V_i=2^i-1$. 
The process breaks down in the fourth step, where the algebra $V_4$ fails to satisfy the
axioms of a vector product algebra. Therefrom one concludes, that $V=V_i$ for $i\le 3$,
and thus $\dim V\in\{0,1,3,7\}$.
Also the isomorphism statement follows readily from this argument.

The following notation and definitions are used. 
The quadratic form corresponding to a symmetric bilinear form $\<\,\>$ is denoted $N$,
i.e., $N(v)=\<v,v\>$. Conversely, 
\begin{equation} \label{q}
  \<u,v\>=\frac{1}{2}(N(u+v)-N(u)-N(v))
\end{equation}
holds.
Given spaces $V$ and $W$ with bilinear forms $\<\,\>_V$ and $\<\,\>_W$ respectively,
denote by $V\perp W$ their orthogonal direct sum, that is $V\oplus W$ with bilinear form
$\<u+v,w+x\>=\<u,w\>_V+\<v,x\>_W$ for $u,w\in V,\;v,x\in W$.
A non-zero element $v\in V$ is called
\emph{isotropic} if $N(v)=0$ and anisotropic otherwise. 
If $U\subset V$ is a subspace then $U^\perp$ denotes its orthogonal complement in $V$ with
respect to $\<\,\>$.

\section{Proof of Theorem~\ref{main}} \label{reduktion}

In this section, $V$ and $W$ always denote vector product algebras over the ground field
$k$.
The following lemma lists some properties of, and provides means to control the
multiplication in the vector product algebra $V$. 

\begin{lma}\label{vm} For all $u,v,w\in V$, the following identities hold.
  \begin{enumerate}
  \item $\<u,uv\>=0$ \label{ii}
  \item $\<uv,uw\>=N(u)\<v,w\>-\<u,v\>\<u,w\>$ \label{iii}
  \item $u(vu)=N(u)v-\<u,v\>u$ \label{iv}
  \end{enumerate}
  If $u,v,w\in V$ are pairwise orthogonal with respect to $\<\,\>$, then
  \begin{enumerate}\setcounter{enumi}{3}
  \item $u(vw)=-(uv)w=(vu)w$\label{v}
  \end{enumerate}
\end{lma}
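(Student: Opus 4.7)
The plan is to prove the four identities in order, each using only its predecessors together with axioms (\ref{d1}), (\ref{d2}), anti-symmetry of the product, and nondegeneracy of $\<\,\>$.

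For (\ref{ii}), anti-symmetry combined with $\ch k\ne 2$ yields $uu=0$; axiom (\ref{d1}) applied with $(u,v,w)\mapsto(u,u,v)$ then reads $\<u,uv\>=\<uu,v\>=0$.

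For (\ref{iii}), I polarize axiom (\ref{d2}): replacing $v$ by $v+w$, expanding each side, and subtracting the two instances of (\ref{d2}) with arguments $(u,v)$ and $(u,w)$ gives the stated identity after dividing by $2$.

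For (\ref{iv}), the idea is to test against an arbitrary $x\in V$ and invoke nondegeneracy. Using (\ref{d1}) and anti-symmetry,
\[
\<x,u(vu)\>=\<xu,vu\>=\<ux,uv\>,
\]
and (\ref{iii}) evaluates the right-hand side as $N(u)\<v,x\>-\<u,v\>\<u,x\>=\<N(u)v-\<u,v\>u,\,x\>$. Nondegeneracy then closes the argument. I expect that finding this testing manoeuvre is the main hurdle; everything else is polarization and bookkeeping.

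For (\ref{v}), I polarize (\ref{iv}) by replacing $u$ with $u+w$. Expanding and subtracting the pure instances of (\ref{iv}) for $u$ and for $w$ yields
\[
u(vw)+w(vu)=2\<u,w\>v-\<u,v\>w-\<v,w\>u.
\]
Pairwise orthogonality of $u,v,w$ annihilates the right-hand side, so $u(vw)=-w(vu)$. Two applications of anti-symmetry ($w(vu)=-(vu)w$ and $vu=-uv$) then produce the full chain $u(vw)=(vu)w=-(uv)w$.
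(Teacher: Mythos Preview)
Your proof is correct and follows essentially the same route as the paper: part~(\ref{ii}) via $u^2=0$ and axiom~(\ref{d1}); part~(\ref{iii}) by polarizing axiom~(\ref{d2}) in the second variable; part~(\ref{iv}) by testing against an arbitrary $x$ with (\ref{d1}), (\ref{iii}) and nondegeneracy; and part~(\ref{v}) by polarizing (\ref{iv}) via $u\mapsto u+w$ and then specializing to the orthogonal case. The only cosmetic difference is that in (\ref{v}) the paper imposes orthogonality before expanding (so $u(vu)=N(u)v$ etc.\ from the outset), whereas you first derive the general identity $u(vw)+w(vu)=2\<u,w\>v-\<u,v\>w-\<v,w\>u$ and then specialize; the underlying computation is the same.
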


\begin{proof} The identity \ref{d1} in the definition gives
$\<u,uv\>=\<u^2,v\>=\<0,v\>=0$.
For \ref{iii}, we use the second identity in the definition together with
Equation~(\ref{q}) and compute 
\begin{multline*}
  2\<uv,uw\>=N(uv+uw)-N(uv)-N(uw)\\
  =N(u)N(v+w)-\<u,v+w\>^2-(N(u)N(v)-\<u,v\>^2)-(N(u)N(w)-\<u,w\>^2)\\
  =N(u)(N(v+w)-N(v)-N(w))-\<u,v+w\>^2+\<u,v\>^2+\<u,w\>^2\\
  =2N(u)\<v,w\>-2\<u,v\>\<u,w\>.
\end{multline*}

If $x\in V$ is an arbitrary vector, then
\begin{multline*}
  \<x,u(vu)\>=\<xu,vu\>=\<ux,uv\>=N(u)\<x,v\>-\<u,x\>\<u,v\>\\
  =\<x,N(u)v-\<u,v\>u\>
\end{multline*}
which implies
\begin{equation*}
  \<x,u(uv)-(N(u)v-\<u,v\>u)\>=0.
\end{equation*}
Since $\<\,\>$ is non-degenerate, it follows that $u(vu)=N(u)v-\<u,v\>u$.

Finally, using clause \ref{iv} and the orthogonality of $u,v,w$ we get
\begin{multline*}
  N(u+w)v=(u+w)(v(u+w))=u(vu)+u(vw)+w(vu)+w(vw)\\=N(u)v+u(vw)+w(vu)+N(w)v
\end{multline*}
and so
\begin{equation*}
  u(vw)=-w(vu)+(N(u+w)-N(u)v-N(w))v=(vu)w+2\<u,w\>=(vu)w,
\end{equation*}
which concludes the proof of the lemma.
\end{proof}

Our basic tool of investigation will be \emph{multiplicatively independent sets}.
We say that a finite subset $E\subset V$ is multiplicatively independent if any $e\in E$ is
anisotropic and orthogonal to the subalgebra $\<E_e\>\subset V$ generated by
$E_e=E\minus\{e\}$. Clearly, every subset of a multiplicatively independent set is also
multiplicatively independent.

Let $A=\{a_1,\ldots,a_n\}\subset V$ be a non-empty finite ordered set. We write
\begin{equation*}
  \Pi(A)=
  \left\{
  \begin{array}{ll}
    a_1 &\mbox{if } n=1 \\ 
    (\Pi(A\!\minus\!\{a_n\}))a_n  &\mbox{if } n>1
  \end{array}
  \right.,
\end{equation*}
that is, $\Pi(A)$ is the ordered product of the elements in $A$ with brackets distributed
as ``far to the left'' as possible. It may be remarked that if $A$ is multiplicatively
independent, which will be our case of concern, the order of the elements, as well as
the distribution of brackets, is essentially unimportant. From Lemma~\ref{vm} follows,
that alterations of these data will at most change the sign of the product.

\begin{prop}\label{E}
  Let $E\subset V$ be a multiplicatively independent set and let $I$ be the set of
  non-empty subsets of $E$, each subset equipped with some fixed order. Then
  the following hold.
  \begin{enumerate}
  \item The set $\{\Pi(A)\}_{A\in I}$ is an orthogonal basis for the vector space
  $\<E\>$. \label{bas}
  \item If $e\in E$, then $\<E\>=\<E_e\>\perp\<e\>\perp\<E_e\>e$. \label{sum}
  \item If $f\in V$ is anisotropic and ortho\-gonal to $\<E\>$, then $E\cup\{f\}$ is a
    multiplicatively independent set. \label{utv}
  \end{enumerate}
\end{prop}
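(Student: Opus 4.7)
The plan is to prove all three parts simultaneously by induction on $n=|E|$. The base case $n=0$ is vacuous (in part 3, $\{f\}$ is multiplicatively independent iff $f$ is anisotropic, which is given). For the inductive step, fix $e \in E$ and write $U=\<E_e\>$; by the inductive hypothesis, $U$ has orthogonal basis $\{\Pi(A):A\subset E_e,\ A\neq\emptyset\}$ and each $\Pi(A)$ is anisotropic (an auxiliary fact provable from Lemma~\ref{vm} axiom (2) by induction on $|A|$, using $\Pi(A\cup\{a\})=\Pi(A)\cdot a$ with $\<\Pi(A),a\>=0$). I would prove parts 1 and 2 first, then use them to deduce part 3.

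The heart of the argument is to show that $U+ke+Ue$ is a subalgebra, hence equals $\<E\>$. Closure reduces to analyzing $x(ye)$ and $(xe)(ye)$ for $x,y\in U$. By bilinearity it suffices to take $x,y$ in the orthogonal basis of $U$. If $x=y$, clause~\ref{iv} of Lemma~\ref{vm} gives $x(xe)=-x(ex)=-N(x)e$ and $(xe)(xe)=0$; if $x\neq y$, then $x,y,e$ are pairwise orthogonal (since $e\perp U$ by multiplicative independence), so clause~\ref{v} yields $x(ye)=-(xy)e\in Ue$, and a similar but slightly longer chain using clauses~\ref{iv} and~\ref{v} (after noting $\<xe,y\>=\<x,ey\>=-\<xy,e\>=0$, so $xe\perp y$) gives $(xe)(ye)=-N(e)\,xy\in U$. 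Orthogonality of the three summands is then routine: $U\perp ke$ holds by hypothesis, $U\perp Ue$ follows from $\<x,ye\>=\<xy,e\>=0$, and $ke\perp Ue$ from Lemma~\ref{vm}(\ref{ii}). Part 1 then drops out: the basis of $U$ from the inductive hypothesis, together with $e$ and $\{\Pi(B)e:\emptyset\neq B\subset E_e\}$, provides an orthogonal basis for the direct sum, with orthogonality of the last family coming from clause~\ref{iii}, $\<\Pi(B)e,\Pi(B')e\>=N(e)\<\Pi(B),\Pi(B')\>$.

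For part 3, let $f$ be anisotropic with $f\perp\<E\>$. The conditions for $E\cup\{f\}$ to be multiplicatively independent split into: (a) $f\perp\<E\>$, which is given; (b) for each $e\in E$, $e\perp\<E_e\cup\{f\}\>$. Now $E_e\cup\{f\}$ is multiplicatively independent by the inductive hypothesis (part 3 applied to $E_e$ with new element $f$), so the \emph{already-established} part 2 for size $n$ applies to $E_e\cup\{f\}$, giving $\<E_e\cup\{f\}\>=\<E_e\>\perp\<f\>\perp\<E_e\>f$. The element $e$ is orthogonal to $\<E_e\>$ and to $f$ by assumption, and to $\<E_e\>f$ via $\<e,xf\>=\<ex,f\>=0$ since $ex\in\<E\>$ and $f\perp\<E\>$.

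The main obstacle I expect is the $(xe)(ye)$ computation: it requires first verifying auxiliary orthogonality ($xe\perp y$, $xe\perp e$) so that Lemma~\ref{vm}(\ref{v}) is applicable, and then unwinding two nested applications of clauses~\ref{iv}--\ref{v}. The rest of the argument is then essentially bookkeeping, with the critical structural point being the order of proofs within the induction: parts 1 and 2 for size $n$ must be proven before part 3 for size $n$, since the latter invokes part 2 applied to another size-$n$ multiplicatively independent set $E_e\cup\{f\}$.
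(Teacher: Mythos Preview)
Your proposal is correct and follows essentially the same inductive strategy as the paper: establish that $\langle E_e\rangle+ke+\langle E_e\rangle e$ is closed under multiplication via Lemma~\ref{vm}, deduce the orthogonal decomposition and basis, and then obtain part~\ref{utv} by applying part~\ref{sum} to the size-$n$ set $E_e\cup\{f\}$. The only cosmetic difference is that the paper proves pairwise orthogonality of the $\Pi(A)$ directly---choosing $a_l\in A\smallsetminus B$ and computing $\langle\Pi(A),\Pi(B)\rangle=-\langle a_l,\Pi(A\smallsetminus\{a_l\})\Pi(B)\rangle=0$ since the inner product lands in $\langle E_{a_l}\rangle$---rather than assembling it from the three-term decomposition as you do.
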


\begin{proof} 
We write $\Sigma=\<E_e\>+\<e\>+\<E_e\>e$. Clearly,
$\Sigma\subset\<E\>$. On the other hand, from Lemma~\ref{vm} follows that $\Sigma$ is
closed under multiplication. As $E\subset\Sigma$, we get
$\<E\>\subset\<\Sigma\>=\Sigma$. If $m$ is the number of elements in $E$, it follows by
induction that $\dim\<E\>=\dim\Sigma\le2^m-1$.

Now suppose that $A,B\in I$, with $A=\{a_i\}_{i=1}^l$ and $a_j\not\in B$ for some 
$j\le l$. Since the order on $A$ only affects the sign of $\Pi(A)$, for our purpose we may
assume that $j=l$. Hence
\begin{multline*}
  \<\Pi(A),\Pi(B)\>=
  \<\Pi(A\minus\{a_l\})a_l,\Pi(B)\>=-\<a_l,\Pi(A\minus\{a_l\})\Pi(B)\>=0,
\end{multline*}
where the last equality follows from that $\Pi(A\minus\{a_l\})\Pi(B)$ is an
element in $\<E_{a_l}\>$, and hence orthogonal to $a_l$.
In addition, for $A=\{a_1,\ldots,a_l\}\in I$ we have
$N(\Pi(A))=\prod_{j=1}^{l}N(a_j)\neq0$. Since the vectors $\Pi(A),\;A\in I$ are
anisotropic and pairwise orthogonal, they are linearly independent, and
$\dim(\sp\{\Pi(A)\}_{A\in I})=2^m-1$. Hence
\begin{equation*}
  \sp\{\Pi(A)\}_{A\in I}=\<E\>=\<E_e\>\perp\<e\>\perp\<E_e\>e
\end{equation*}
and $\{\Pi(A)\}_{A\in I}$ is a basis for $\<E\>$.

For (\ref{utv}), we need to show that any $e\in E$ is orthogonal to $\<E_e\cup\{f\}\>$. By
induction, $E_e\cup\{f\}$ is multiplicatively independent, and thus
 $\<E_e\cup\{f\}\>=\<E_e\>\perp\<f\>\perp\<E_e\>f$. But 
$e\in \<E_e\>^\perp \cap \<f\>^\perp \cap (\<E_e\>f)^\perp$, which implies
$e\in\<E_e\cup\{f\}\>^\perp$.
\end{proof}

As already stated in the proof, Proposition~\ref{E} implies that if $E$ is a
multiplicatively independent set with $m$ elements, then $\dim\<E\>=2^m-1$.
Moreover, if $V$ is finite-dimensional, from Proposition~\ref{E}:\ref{utv} it follows that
$V=\<E\>$ for some multiplicatively independent set $E\subset V$. Such a set we call a
\emph{multiplicative base} for $V$. The first statement in the proposition implies the
following result for morphisms of vector product algebras.

\begin{cor} \label{morfism}
  \begin{enumerate}
  \item Let $E$ be a multiplicative base for $V$ and $\sigma:E\to W$ a map such that $\sigma(E)$
    is multiplicatively independent and $N(\sigma(e))=N(e)$ for all $e\in E$. Then $\sigma$
    extends uniquely to a morphism $V\to W$. \label{m1}
  \item The vector product algebras $V$ and $W$ are isomorphic if and only if there exist
    multiplicative bases $E\subset V$, and $F\subset W$ and a bijection $\s:E\to F$ such
    that $N(\sigma(e))=N(e)$ for all $e\in E$. \label{m2}
  \end{enumerate}
\end{cor}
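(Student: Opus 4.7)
My plan is to prove clause~(\ref{m1}) by induction on $m=|E|$, from which clause~(\ref{m2}) will follow formally. The case $m=0$ is vacuous. For the induction step, fix some $e\in E$, put $E'=E\minus\{e\}$, $e'=\s(e)$ and $F'=\s(E')$; as $F'$ is a subset of the multiplicatively independent set $\s(E)$, it is itself multiplicatively independent, and the induction hypothesis gives a unique morphism $\f':\<E'\>\to\<F'\>$ extending $\s|_{E'}$. Using the decomposition $\<E\>=\<E'\>\perp\<e\>\perp\<E'\>e$ of Proposition~\ref{E}(\ref{sum}) in $V$, and the analogous one in $W$, I define
\begin{equation*}
  \f(v_1+\a e+v_2 e)=\f'(v_1)+\a e'+\f'(v_2)e'\qquad(v_1,v_2\in\<E'\>,\;\a\in k).
\end{equation*}
Orthogonality of $\f$ is then immediate: the three orthogonal summands in $V$ are sent to the three orthogonal summands in $W$; $\f'$ preserves norms on $\<E'\>$ by induction; $N(\a e')=\a^2 N(e)=N(\a e)$; and axiom~\ref{d2} combined with $\<v_2,e\>=0$ yields $N(v_2e)=N(v_2)N(e)=N(\f'(v_2)e')$.

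The key point is multiplicativity. My plan is to distil Lemma~\ref{vm} into a short list of \emph{structure formulas} that compute all non-trivial products of summand representatives in terms of data preserved by $\f'$: for $P,Q\in\<E'\>$,
\begin{align*}
  e(Qe)&=N(e)\,Q,\\
  P(Qe)&=-\<P,Q\>e-(PQ)e,\\
  (Pe)(Qe)&=-N(e)\,PQ,
\end{align*}
complemented by antisymmetry and the trivial identities $P\cdot Q=PQ$ and $P\cdot e=Pe$. The first is Lemma~\ref{vm}(\ref{iv}) applied to $u=e$. The second I would obtain by first verifying $Q(Qe)=-N(Q)e$ via Lemma~\ref{vm}(\ref{iv}) and the pairwise-orthogonal case $P\perp Q$ via Lemma~\ref{vm}(\ref{v}), and then extending to arbitrary $P$ by writing $P=\l Q+P'$ with $P'\perp Q$ for anisotropic $Q$ (and passing to isotropic $Q$ by bilinearity on the anisotropic basis $\{\Pi(A)\}$ of Proposition~\ref{E}(\ref{bas})). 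The third I would derive by applying Lemma~\ref{vm}(\ref{v}) to the pairwise orthogonal triple $(Pe,Q,e)$ to rewrite $(Pe)(Qe)=((PQ)e)e$ and then invoking Lemma~\ref{vm}(\ref{iv}) with $u=e$; the general case again follows by bilinearity. Since every right-hand side involves only $N(e)$, the restriction of $\<\,\>$ to $\<E'\>$ and products within $\<E'\>$, the same identities hold in $W$ after replacing $P,Q,e$ by $\f'(P),\f'(Q),e'$, and hence $\f(uv)=\f(u)\f(v)$ on every pair of summand representatives and, by bilinearity, on all of $V$.

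Uniqueness in (\ref{m1}) is immediate from Proposition~\ref{E}(\ref{bas}): any morphism $\psi:V\to W$ with $\psi|_E=\s$ must satisfy $\psi(\Pi(A))=\Pi(\s(A))$ for every $A\in I$ and therefore agrees with $\f$ on a basis of $V$. Part (\ref{m2}) is then a formal consequence. In the forward direction, any isomorphism $\f:V\to W$ carries a multiplicative base $E\subset V$ to a multiplicative base $F=\f(E)\subset W$ with $N(\f(e))=N(e)$, since $\f$ preserves both $\<\,\>$ and multiplicative independence. For the converse, applying (\ref{m1}) to $\s$ and to $\s^{-1}$ yields morphisms $\f:V\to W$ and $\psi:W\to V$ whose compositions extend $\mathrm{id}_E$ and $\mathrm{id}_F$ and hence equal the identity by uniqueness. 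The main obstacle is the multiplicativity check in (\ref{m1}); the reduction to the three structure formulas above is designed precisely so that their invariance under $\f'$ becomes transparent and the otherwise intricate case analysis collapses.
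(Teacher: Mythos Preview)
Your proof is correct. The paper itself gives no proof of this corollary at all; it simply asserts that ``the first statement in the proposition implies'' it, pointing to Proposition~\ref{E}(\ref{bas}). The implied route is to define $\f$ directly on the orthogonal basis $\{\Pi(A)\}_{A\in I}$ by $\f(\Pi(A))=\Pi(\s(A))$ and then check multiplicativity, which the paper leaves entirely to the reader.

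You take a slightly different organizational route: induction on $|E|$ via the summand decomposition of Proposition~\ref{E}(\ref{sum}) rather than the basis of Proposition~\ref{E}(\ref{bas}). This buys you something concrete: the multiplicativity check reduces to your three structure formulas, each of which is visibly invariant under $\f'$, so the case analysis collapses exactly as you intended. In the direct basis approach one would instead have to compute $\Pi(A)\cdot\Pi(B)$ for arbitrary $A,B\in I$, which amounts to the same work (repeated use of Lemma~\ref{vm}(\ref{iv}),(\ref{v})) but packaged less transparently. Your derivations of the three formulas are sound; in particular, for the second and third formulas your reduction to the case of orthogonal $P,Q$ followed by bilinear extension over the anisotropic orthogonal basis $\{\Pi(A)\}_{A}$ of $\<E'\>$ is exactly the right way to handle possibly isotropic $P,Q$. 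One small point you leave implicit but which is harmless: the map $v\mapsto ve$ from $\<E'\>$ to $\<E'\>e$ is injective (since $e(ve)=N(e)v$ by Lemma~\ref{vm}(\ref{iv})), so $\f$ is well defined on the third summand.
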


We are now ready to conclude the proof of Theorem~\ref{main}.
Suppose that $E=\{u,v,w,z\}\subset V$ is a multiplicatively independent set. Evaluating
$u\(v(wz)\)$ we get 
\begin{align*}
  u(v(wz))&=u((wv)z)=((wv)u)z=-((vw)u)z\\
  \intertext{but also}
  u(v(wz))&=(vu)(wz)=(w(vu))z=((vw)u)z.
\end{align*}
This means that $u(v(wz))=0$, which contradicts the multiplicative independence of
$E$. Hence any multiplicatively independent set contains at most 3 elements. This rules
out the existence of infinite-dimensional vector product algebras, since in such algebras
we would be able to find multiplicatively independent sets of any finite cardinality. It
further tells that every vector product algebra has dimension 0, 1, 3 or 7.

Suppose $V$ and $W$ are vector product algebras with equivalent bilinear forms. Further
suppose that $E\subset V$ and $F\subset W$ are multiplicatively independent sets and that
$\s:E\to F$ is a bijection preserving the quadratic form $N$. By Corollary~\ref{morfism},
$\<E\>$ and $\<F\>$ are isomorphic, and thus in particular isometric.
Witt's theorem for quadratic forms now implies that $\<E\>^\perp$ and $\<F\>^\perp$ also
are isometric. Hence there exist $e\in\<E\>^\perp,\;f\in\<F\>^\perp$
such that $N(e)=N(f)\neq0$. Proposition~\ref{E} implies, that $E\cup\{e\}$ and
$F\cup\{f\}$ are multiplicatively independent, and
$\<E\cup\{e\}\>=\<E\>\perp\<e\>\perp\<E\>e$. We extend $\s$ to a bijection $E\cup\{e\}\to
F\cup\{f\}$ by setting $\s(e)=f$. Proceeding inductively, we get a bijection between
multiplicative bases of $V$ and $W$, preserving $N$. By Corollary~\ref{morfism}, the
algebras $V$ and $W$ then are isomorphic.

\section{Acknowledgements}

The author is indebted to Jos\'e Antonio Cuenca Mira, who suggested to generalise the note
\cite{svp} to general ground fields, and gave valuable advice for this generalisation.

\bibliographystyle{plain}
\bibliography{../litt.bib}

\end{document}